\newtheorem{theorem}{Theorem}
\newtheorem{proposition}{Proposition}
\newtheorem{lemma}{Lemma}
\newtheorem{corollary}{Corollary}
\newcommand*{\gp}[1]{\langle\;#1\;\rangle}
\newcommand*{\order}[1]{\vert #1 \vert}
\newcommand*{\cd}{\circledast}
\DeclareMathOperator{\imf}{im}
\begin{document}
	
\begin{frontmatter}

	\title{The Order of the Unitary Subgroups of Group Algebras}

	\author{Zsolt Adam Balogh}
	\ead{baloghzsa@gmail.com}

	\address{Department of Mathematical Sciences\\
			 United Arab Emirates University, Al Ain, \\
			 United Arab Emirates, P.O.Box: 15551}
		
\begin{abstract}
Let $FG$ be the group algebra of a finite $p$-group $G$ over a finite field $F$ of positive characteristic $p$. Let $\cd$ be an involution of the algebra $FG$ which is a linear extension of an anti-automorphism of the group $G$ to $FG$.
If $p$ is an odd prime, then the order of the $\cd$-unitary subgroup of $FG$ is established.
For the case $p=2$ we generalize a result obtained for finite abelian $2$-groups. It is proved that the order of the $*$-unitary subgroup of $FG$ of a non-abelian $2$-group is always divisible by a number which depends only on the size of $F$, the order of $G$ and the number of elements of order two in $G$. 
Moreover, we show that  the order of the $*$-unitary subgroup of $FG$ determines the order of the finite
$p$-group $G$.
\end{abstract}
			
\begin{keyword}
	group algebras; unit group of group algebras; unitary subgroups	
\end{keyword}
		
\end{frontmatter}

\section{Introduction and main results}

Let $FG$ be the group algebra of a finite $p$-group $G$ over a finite field $F$ of positive characteristic $p$. Let
\[
V(FG)=\Big\{\; x=\sum_{g\in G}\alpha_gg\in FG\; \mid\;  \chi(x)=\sum_{g\in G}\alpha_g=1\; \Big\}
\]
be the group of normalized units of $FG$, where  $\chi(x)$ is the augmentation map (see \cite [Chapters 2-3, p.\,194-196]{Bovdi_survey}). In this case, the order of the group  $V(FG)$ is equal to $\order{F}^{\order{G}-1}$, so  the order of  $V(FG)$ can be very large even for a small group $G$. Note that, studying the structure of the group $V(FG)$ is a rather difficult task (for more details see the survey \cite{Bovdi_survey}).

Let $\cd$ be an involution of the algebra $FG$. We say that the involution $\cd$ arises from the group $G$, if $\cd$ is a linear extension of an anti-automorphism of $G$ to $FG$. An example for such kind of involution  is the canonical involution that is the linear extension of the anti-automorphism of $G$ which sends each element of $G$ to its inverse. This involution is usually denoted by $*$.

An element $u \in V(FG)$ is called {\it $\cd$-unitary}, if $u^{\cd}=u^{-1}$ with respect to the involution $\cd$ of $FG$.
The set $V_{\cd}(FG)$ of all $\cd$-unitary units  forms a subgroup of $V(FG)$ which is called $\cd$-unitary subgroup.
Interest in the structure of unitary subgroups arose in algebraic topology and unitary $K$-theory (see Novikov's papers \cite{Novikov} and  Bovdi's paper \cite{Bovdi_Unitarity}). Let $L$ be a finite Galois extension of $F$ with Galois group $G$, where $F$ is a finite field of characteristic two.
Serre \cite{serre} identified an interesting relation between the self-dual normal basis of $L$ over $F$ and the $*$-unitary subgroup of $FG$. This relationship also makes the study of the unitary subgroups timely.

The unitary subgroups have been proven to be very useful subgroups in several studies (see \cite{Balogh_MD, Balogh_Creedon_Gildea, Balogh_Laver, Bovdi_Erdei_II, Bovdi_Erdei_I, Bovdi_Szakacs_II,  Bovdi_Kovacs_I, Bovdi_Salim, Bovdi_Grichkov, Creedon_Gildea_I} and \cite{Creedon_Gildea_II}). However, we know very little about their structure, as even finding their order is a challenging problem. The first results in this area were published in the $1980$'s. For finite abelian $p$-groups $G$ the order of $V_*(FG)$ was given in \cite{Bovdi_Szakacs_III}.
\begin{proposition}(\cite[Theorem 2]{Bovdi_Szakacs_III})\label{szakacs}
	Let $G$ be a finite abelian $2$-group. If ${F}$ is  a finite field of characteristic $2$, then the order  $\order{V_*({F}G)}$ is divisible by $\order{F}^{\frac{1}{2}(\order{G}+\order{G\{2\}})-1}$, that is, 	
    \[
       \order{V_*({F}G)}=\Theta \cdot \order{F}^{\frac{1}{2}(\order{G}+\order{G\{2\}})-1},
	\]
	where $\Theta=\order{G^2\{2\}}$ and $\order{S}$ denotes the size of a finite set $S$.
\end{proposition}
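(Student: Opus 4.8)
The plan is to count the $*$-unitary units directly, exploiting that $FG$ is commutative because $G$ is abelian. By commutativity the map $N\colon V(FG)\to V(FG)$, $N(u)=uu^{*}$, is a group homomorphism with $\ker N=V_{*}(FG)$; since $N(u)^{*}=N(u)$ and $\chi(N(u))=1$, its image lies in the subgroup $S:=1+\Delta^{s}$ of symmetric normalized units, where $\Delta^{s}$ is the space of symmetric elements of the augmentation ideal. Counting coordinates (one for each $g\in G\{2\}$, one for each orbit $\{g,g^{-1}\}$ with $g^{2}\neq1$, and $-1$ for the augmentation) gives $\dim_{F}\Delta^{s}=\frac{1}{2}(\order{G}+\order{G\{2\}})-1$, so $\order{S}=\order{F}^{\frac{1}{2}(\order{G}+\order{G\{2\}})-1}$. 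As $\order{V(FG)}=\order{F}^{\order{G}-1}$, it is enough to prove $\order{\imf N}=\order{F}^{\frac{1}{2}(\order{G}-\order{G\{2\}})}/\order{G^{2}\{2\}}$, i.e.\ that $\imf N$ has index $\order{G^{2}\{2\}}\cdot\order{F}^{\order{G\{2\}}-1}$ in $S$.

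I would then parametrize $V_{*}(FG)$ by writing $u=1+z$ with $z\in\Delta(FG)$; in characteristic $2$ unitarity is the quadratic relation $z+z^{*}+zz^{*}=0$. Split $FG=B\oplus C$, where $B$ is the (local, with nilpotent maximal ideal $\Delta^{s}$) subalgebra of symmetric elements and $C$ is the $F$-span of one representative of each pair $\{g,g^{-1}\}$ with $g^{2}\neq1$, and write $z=b+c$. The relation becomes
\[
b^{2}+b\,(c+c^{*})=(c+c^{*})+cc^{*},
\]
an equation over $B$. The key structural point is that in characteristic $2$ the map $b\mapsto b^{2}+b(c+c^{*})$ is \emph{additive} (the Frobenius kills the mixed terms), so for each fixed $c$ the set of admissible $b$ is either empty or a coset of $K_{c}:=\{\,b\in B:b(b+c+c^{*})=0\,\}$. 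Consequently $\order{V_{*}(FG)}=\sum_{c}\order{K_{c}\cap\Delta}$, the sum being over those $c\in C$ for which $(c+c^{*})+cc^{*}$ lies in the image of $b\mapsto b^{2}+b(c+c^{*})$.

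The heart of the proof is to evaluate this sum. Both the solvability condition on $c$ and the order $\order{K_{c}}$ are controlled by the squaring map $g\mapsto g^{2}$ of $G$: since $b^{2}=\sum_{g}\beta_{g}^{2}g^{2}\in F[G^{2}]$, the relevant data decompose along the fibres of $G\to G^{2}$, which are the cosets of $G\{2\}$, and one tracks how $*$ (inversion) permutes these cosets. A coset $gG\{2\}$ is $*$-stable precisely when $g^{2}\in G\{2\}$, i.e.\ $g\in G\{4\}$, and $gG\{2\}\mapsto g^{2}$ is then a bijection from the $*$-stable cosets onto $G^{2}\{2\}$ (because $g\mapsto g^{2}$ maps $G\{4\}$ onto $G^{2}\{2\}$ with kernel $G\{2\}$), while the remaining cosets are interchanged in pairs. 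On the paired cosets and on the trivial coset the quadratic form governing $b\mapsto b^{2}+b(c+c^{*})$ is nondegenerate and contributes only powers of $\order{F}$; the $*$-stable cosets with nontrivial square, i.e.\ the points of $G^{2}\{2\}\setminus\{1\}$, are where it degenerates, and these account for the anomalous factor $\Theta=\order{G^{2}\{2\}}$. Assembling the contributions should give $\order{V_{*}(FG)}=\order{G^{2}\{2\}}\cdot\order{F}^{\frac{1}{2}(\order{G}+\order{G\{2\}})-1}$, which in particular exhibits $\order{F}^{\frac{1}{2}(\order{G}+\order{G\{2\}})-1}$ as a divisor of $\order{V_{*}(FG)}$.

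I expect the last step — verifying, uniformly over all finite abelian $2$-groups, that the degenerate cosets contribute exactly $\order{G^{2}\{2\}}$ while the rest assemble into the clean power $\order{F}^{\frac{1}{2}(\order{G}-\order{G\{2\}})}$ — to be the main obstacle; everything else is formal. A tidier repackaging of the same computation is cohomological: the quotient $V_{*}(FG)\big/\{\,u^{*}u^{-1}:u\in V(FG)\,\}$ is the Tate cohomology group $\widehat H^{-1}(\langle*\rangle,V(FG))$; it is elementary abelian, since $w^{2}=w(w^{*})^{-1}\in\{u^{*}u^{-1}\}$ for every $w\in V_{*}(FG)$; and the subgroup $\{u^{*}u^{-1}:u\in V(FG)\}$ has order $\order{F}^{\frac{1}{2}(\order{G}-\order{G\{2\}})}$ because the homomorphism $u\mapsto u^{*}u^{-1}$ has kernel exactly $S$. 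One is then reduced to computing $\dim_{\mathbb{F}_{2}}\widehat H^{-1}$, which once more comes down to the action of $*$ on $G$ modulo $G\{2\}$ and the size of $G^{2}\{2\}$.
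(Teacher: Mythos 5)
First, a point of reference: the paper does not prove this proposition at all --- it is quoted verbatim from Bovdi--Szak\'acs \cite[Theorem 2]{Bovdi_Szakacs_III} as known input, so there is no in-paper proof to compare against. The closest internal analogue is the norm-map mechanism in Lemma \ref{lemma_main} and the proof of Theorem \ref{main_theorem}: there the author also studies $\Phi(x)=xx^*$, observes that its fibres are cosets of $V_*(FG)$, and then pins down the size of the image by exhibiting explicit subgroups inside it and inducting on $\order{G}$ through a central subgroup of order two. Your first paragraph is exactly this mechanism in the abelian case, and it is correct: $N(u)=uu^*$ is a homomorphism with kernel $V_*(FG)$ and image inside the symmetric normalized units, the dimension count $\dim_F\Delta^s=\frac12(\order{G}+\order{G\{2\}})-1$ is right, and so is the reduction to showing $\order{\imf N}=\order{F}^{\frac12(\order{G}-\order{G\{2\}})}/\order{G^2\{2\}}$. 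The observations about $*$-stable cosets of $G\{2\}$ and the bijection $G\{4\}/G\{2\}\to G^2\{2\}$ are also correct, and the additivity of $b\mapsto b^2+b(c+c^*)$ in characteristic $2$ does make the solution sets cosets of $K_c$ as you claim.

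The genuine gap is that the decisive computation is never performed. Everything you establish is bookkeeping that rewrites the claim $\order{V_*(FG)}=\order{G^2\{2\}}\cdot\order{F}^{\frac12(\order{G}+\order{G\{2\}})-1}$ as an equivalent claim about $\order{\imf N}$, or about $\sum_c\order{K_c}$, or about $\dim_{\mathbb{F}_2}\widehat H^{-1}$; none of these equivalent forms is then evaluated. You say yourself that verifying the degenerate cosets contribute exactly $\order{G^2\{2\}}$ while the rest assemble into $\order{F}^{\frac12(\order{G}-\order{G\{2\}})}$ is ``the main obstacle,'' but that obstacle \emph{is} the theorem: both the divisibility statement and the identification $\Theta=\order{G^2\{2\}}$ live entirely in that step, and nothing in the sketch rules out, say, extra degeneration on the paired cosets that would spoil the clean power of $\order{F}$, or an $F$-dependence of the degenerate contribution. (Two smaller issues: you would need to argue that the solvability condition on $c$ --- whether $(c+c^*)+cc^*$ lies in $\imf(b\mapsto b^2+b(c+c^*))$ --- holds for the ``right'' set of $c$, and that $\order{K_c}$ is constant on the pieces of your decomposition; neither is addressed. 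Note also that the constraint $z\in\Delta$ is automatic from $z+z^*+zz^*=0$ by applying $\chi$, so the $\cap\,\Delta$ in your count is harmless but unexplained.) As it stands the proposal is a plausible and well-organized plan whose core is missing, so it does not yet constitute a proof.
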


It follows that the number $\Theta$ does not depend on the size of the field $F$. The following breakthrough result was proved for certain non-abelian $2$-groups by Bovdi and Roza.
\begin{proposition}(\cite[Corollary 2]{Bovdi_Rosa_I})\label{roza}
	If $\order{F}=2^m\geq 2$, then:
	\[
	\order{V_*({F}G)}=\Theta \cdot \order{F}^{\frac{1}{2}(\order{G}+\order{G\{2\}})-1},
	\]
	where
	\begin{enumerate}
		\item[(i)] $\Theta=1$ if $G$ is a dihedral $2$-group;
		\item[(ii)] $\Theta=4$ if $G$ is a generalized quaternion $2$-group.
	\end{enumerate}
\end{proposition}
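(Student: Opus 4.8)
The plan is to exploit the common structure of a dihedral $2$-group $G=D_{2^n}$ and a generalized quaternion $2$-group $G=Q_{2^n}$ (one may assume $n\ge 3$, since for $n\le 2$ the group is abelian and the statement is contained in Proposition~\ref{szakacs}): each has a cyclic maximal subgroup $C=\gp{a}$ of order $2^{n-1}$, and there is $b\notin C$ with $G=C\cup Cb$, conjugation by $b$ inverting $C$. Put $q=\order{F}=2^{m}$ and write each $x\in FG$ uniquely as $x=\alpha+\beta b$ with $\alpha,\beta\in FC$. The crucial point is that conjugation by $b$ restricts on $FC$ to the canonical involution $*$ of $FC$, so $x^{*}$ and $xx^{*}$ can be computed inside the commutative algebra $FC$. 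Using $\ch F=2$ and commutativity one gets, in the dihedral case ($b^{2}=1$, $b^{*}=b$, and $(\beta b)^{*}=\beta b$ for every $\beta\in FC$ since each $a^{i}b$ is an involution), $x^{*}=\alpha^{*}+\beta b$ and $xx^{*}=\alpha\alpha^{*}+\beta\beta^{*}$; and in the generalized quaternion case ($b^{2}=z:=a^{2^{n-2}}$, $b^{*}=bz$), $x^{*}=\alpha^{*}+(\beta z)b$ and $xx^{*}=(\alpha\alpha^{*}+\beta\beta^{*})+\alpha\beta(1+z)b$. Finally, since a one-sided inverse in a finite-dimensional algebra is two-sided and $\chi(x^{*})=\chi(x)$, any $x$ with $xx^{*}=1$ satisfies $\chi(x)^{2}=1$, hence $\chi(x)=1$; so $V_{*}(FG)=\{x\in FG:xx^{*}=1\}$.

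For $G=Q_{2^n}$ this gives a clean closed form. Here $xx^{*}=1$ means $\alpha\alpha^{*}+\beta\beta^{*}=1$ and $\alpha\beta(1+z)=0$. Since $1+z=(1+a)^{2^{n-2}}$ generates $J^{2^{n-2}}$, where $J=\rad(FC)$ and $J^{2^{n-1}}=0$, the annihilator of $1+z$ in $FC$ is precisely $J^{2^{n-2}}$, so the second equation reads $\alpha\beta\in J^{2^{n-2}}$. Reducing $\alpha\alpha^{*}+\beta\beta^{*}=1$ modulo $J$ gives $(\overline{\alpha}+\overline{\beta})^{2}=1$ in $F$, hence $\overline{\alpha}+\overline{\beta}=1$; so exactly one of $\alpha,\beta$ is a unit, the other then lies in $J^{2^{n-2}}$ (cancelling the unit in $\alpha\beta\in J^{2^{n-2}}$), whence its norm lies in $(J^{2^{n-2}})^{2}=J^{2^{n-1}}=0$, forcing the unit coordinate into $V_{*}(FC)$. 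Thus $V_{*}(FG)$ is the disjoint union of $\{u+vb:u\in V_{*}(FC),\,v\in J^{2^{n-2}}\}$ and $\{u+vb:u\in J^{2^{n-2}},\,v\in V_{*}(FC)\}$. Since $\order{J^{2^{n-2}}}=q^{2^{n-2}}$ and, by Proposition~\ref{szakacs} applied to the cyclic group $C$ of order $2^{n-1}$ (for which $\order{C^{2}\{2\}}=2$), $\order{V_{*}(FC)}=2q^{2^{n-2}}$, we obtain $\order{V_{*}(FG)}=2\cdot 2q^{2^{n-2}}\cdot q^{2^{n-2}}=4q^{2^{n-1}}$. As $\order{Q_{2^n}\{2\}}=2$, this equals $4\cdot q^{\frac12(\order{G}+\order{G\{2\}})-1}$, i.e.\ $\Theta=4$.

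For $G=D_{2^n}$ the description collapses to the single equation $V_{*}(FG)\cong\{(\alpha,\beta)\in(FC)^{2}:\alpha\alpha^{*}+\beta\beta^{*}=1\}$, the unit sphere of the standard rank-two hermitian form on $(FC)^{2}$ over $(FC,*)$; equivalently, $\alpha+\beta b\mapsto\bigl(\begin{smallmatrix}\alpha&\beta\\ \beta^{*}&\alpha^{*}\end{smallmatrix}\bigr)$ realizes $FD_{2^n}$ as a $*$-stable subalgebra of $M_{2}(FC)$ carrying the conjugate-transpose involution. I would then proceed in two steps. First, show that $\mathrm{U}_{2}(FC,*)$ acts transitively on this unit sphere: every such vector is unimodular (one coordinate is a unit, by the reduction mod $J$), its orthogonal complement is a free rank-one $FC$-module, and the self-product of a generator $w$ of that complement equals $(\det P)(\det P)^{*}$ for the base-change matrix $P\in\mathrm{GL}_{2}(FC)$ from $\{e_{1},e_{2}\}$ to $\{v,w\}$, hence is automatically a norm, so a Witt-type extension produces $M\in\mathrm{U}_{2}(FC,*)$ with $M e_{1}=v$; the stabilizer of $e_{1}$ is $V_{*}(FC)$, so $\order{V_{*}(FD_{2^n})}=\order{\mathrm{U}_{2}(FC,*)}/\order{V_{*}(FC)}$. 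Second, compute $\order{\mathrm{U}_{2}(FC,*)}$ from the split surjection $\mathrm{U}_{2}(FC,*)\to\mathrm{O}_{2}(F)\cong(F,+)$ obtained by reducing mod $J$, together with the congruence subgroup $K=\mathrm{U}_{2}(FC,*)\cap(I+M_{2}(J))$. With $\order{V_{*}(FC)}=2q^{2^{n-2}}$ and $\tfrac12(\order{D_{2^n}}+\order{D_{2^n}\{2\}})-1=3\cdot 2^{n-2}$, this should yield $\order{V_{*}(FD_{2^n})}=q^{3\cdot 2^{n-2}}$, i.e.\ $\Theta=1$. Less structurally, one may instead count $\{(\alpha,\beta):\alpha\alpha^{*}+\beta\beta^{*}=1\}$ directly, partitioning by which of $\alpha,\beta$ is a unit and by $(\chi(\alpha),\chi(\beta))$ and using that $\gamma\mapsto\gamma\gamma^{*}$ is a homomorphism on $(FC)^{\times}=F^{\times}\times V(FC)$ with kernel $V_{*}(FC)$; or argue by induction on $n$, using the $*$-stable subalgebra $FD_{2^{n-1}}\le FD_{2^n}$ coming from $\gp{a^{2},b}\cong D_{2^{n-1}}$.

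The main obstacle is this last step in the dihedral case: everything else is bookkeeping with Proposition~\ref{szakacs}, but pinning down $\order{\mathrm{U}_{2}(FC,*)}$ exactly — equivalently the order of $K$, equivalently the image and fibres of the norm map $\gamma\mapsto\gamma\gamma^{*}$ on the local ring $FC\cong F[t]/(t^{2^{n-1}})$ and the distribution of $1+\gamma\gamma^{*}$ — is where the real work lies. Note that the usual Cayley-transform reduction for unitary subgroups is unavailable here, because $I+N$ is not invertible when $N$ lies in the radical and $\ch F=2$, so a genuinely characteristic-$2$ argument is needed at this point.
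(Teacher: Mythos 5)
First, a point of reference: the paper itself does not prove this proposition — it is quoted verbatim from Bovdi and Rosa (\cite[Corollary 2]{Bovdi_Rosa_I}) — so your attempt is necessarily a from-scratch argument rather than a variant of an internal proof. Your generalized quaternion half is essentially complete and correct: writing $x=\alpha+\beta b$ with $\alpha,\beta\in FC$, the identities $x^*=\alpha^*+(\beta z)b$ and $xx^*=\alpha\alpha^*+\beta\beta^*+\alpha\beta(1+z)b$, the identification $\mathrm{Ann}(1+z)=J^{2^{n-2}}$ in $FC\cong F[t]/(t^{2^{n-1}})$, and the resulting parametrization of $V_*(FQ_{2^n})$ by pairs with one coordinate in $V_*(FC)$ and the other in $J^{2^{n-2}}$, combined with Proposition \ref{szakacs} for the cyclic group $C$ (which gives $\order{V_*(FC)}=2\order{F}^{2^{n-2}}$), do yield $\order{V_*(FQ_{2^n})}=4\order{F}^{2^{n-1}}$, i.e.\ $\Theta=4$. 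One presentational caveat: ``exactly one of $\alpha,\beta$ is a unit'' does not follow from $\chi(\alpha)+\chi(\beta)=1$ alone (over $\order{F}>2$ both augmentations can be nonzero); you need to invoke $\alpha\beta\in J^{2^{n-2}}$, a proper ideal, to exclude two units — the ingredients are in your text, but the inference should be stated in that order.

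The dihedral case, however, contains a genuine gap, which you in effect concede. You correctly reduce $\Theta=1$ to counting $\{(\alpha,\beta)\in(FC)^2:\alpha\alpha^*+\beta\beta^*=1\}$, and the orbit--stabilizer reduction (transitivity of $\mathrm{U}_2(FC,*)$ on this sphere via unimodularity, the free orthogonal complement, and the determinant-of-Gram rescaling) can indeed be made to work over the local ring $FC$; but at that point everything hinges on the exact value of $\order{\mathrm{U}_2(FC,*)}$ — equivalently on the image and fibres of the norm map $\gamma\mapsto\gamma\gamma^*$ on $F[t]/(t^{2^{n-1}})$, or on the order of the congruence subgroup $K=\mathrm{U}_2(FC,*)\cap(I+M_2(J))$ — and none of the three strategies you list (split surjection onto $\mathrm{O}_2(F)\cong(F,+)$ plus $K$, direct counting by cases, induction on $n$) is actually carried out. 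This is exactly where the content of case (i) lies, and, as you yourself note, the characteristic-$2$ obstruction (no Cayley transform on $I+M_2(J)$) means it is not routine; the claimed value $\order{V_*(FD_{2^n})}=\order{F}^{3\cdot 2^{n-2}}$ is therefore asserted, not derived. As it stands you have a proof of (ii) but not of (i); to finish you must either complete the count of the unit sphere (e.g.\ by determining $\order{K}$ or the fibres of the norm map on units and non-units separately), or do what the paper does and simply cite \cite{Bovdi_Rosa_I}.
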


In \cite{Balogh_IEJA}, the value of the number  $\Theta$ was given for all non-abelian groups of order $2^4$. Although, for these groups, the number  $\Theta$ is not equal to $\order{G^2\{2\}}$ it does not depend on the field $F$. Wang and Liu \cite{Wang} evaluated   $\Theta$ in the case when $G$ is a non-abelian $2$-group given by a central extension of the form
\[
1 \longrightarrow C_{2^m} \longrightarrow G \longrightarrow C_2 \times \cdots \times C_2 \longrightarrow 1,
\]
in which $m\geq 1$ and $|G'|=2$. At present, the question of whether the quotient $\Theta$ depends on the field $F$ is still open.

Our main results are  the following.
\begin{theorem}\label{main_theorem_p}
	Let $G$ be a finite $p$-group, where $p$ is an odd prime and  let  $F$ be a finite field of characteristic $p$. If  $\cd$ is  an involution of $FG$ that arises from the group $G$, then
    \[
       \order{V_{\cd}(FG)}=\order{F}^{\frac{1}{2}(\order{G}-\order{G_{\cd}})},
    \]
    where $G_{\cd}=\{\,g\; \vert \; g=g^{\cd}\,\}$.
\end{theorem}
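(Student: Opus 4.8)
The plan is to count the $\cd$-unitary units directly by passing to the "skew" linear structure that the involution induces on the augmentation ideal. Recall that for $p$ odd the map $u\mapsto u^{\cd}u$ is a reasonable substitute for a norm, and the key is that $1+I$ (where $I$ is the augmentation ideal, a nilpotent ideal since $G$ is a $p$-group) is a $p$-group of exponent $p^k$ for suitable $k$. First I would observe that the map $\theta\colon V(FG)\to V(FG)$ given by $\theta(u)=u^{\cd}u$ is well defined into the set of $\cd$-symmetric normalized units, and that $u$ is $\cd$-unitary exactly when $\theta(u)=1$. So $V_{\cd}(FG)=\ker\theta$ if $\theta$ were a homomorphism — which it is not in general — hence the real work is to control the fibres of $\theta$.

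The cleaner route, and the one I would actually carry out, is via the "Bass-type" or Sandling-style bijection between $1+I$ and $I$ using a logarithm/exponential or, more elementarily in characteristic $p$ with $p$ odd, the Cayley-type correspondence. Concretely, consider the subspace $I^{-}=\{x\in I: x^{\cd}=-x\}$ of $\cd$-skew elements of the augmentation ideal and $I^{+}=\{x\in I:x^{\cd}=x\}$ of $\cd$-symmetric elements. Since $p$ is odd, $FG=I^{+}\oplus I^{-}\oplus F\cdot 1$ as $F$-spaces (split $x$ into $\tfrac12(x+x^{\cd})$ and $\tfrac12(x-x^{\cd})$), and one computes
\[
\dim_F I^{-}=\tfrac12\bigl(\order{G}-\order{G_{\cd}}\bigr),
\]
because the group basis $G$ is permuted by $\cd$ with fixed points exactly $G_{\cd}$, so the $\pm$ eigenspaces of $\cd$ on $FG$ have dimensions $\tfrac12(\order G+\order{G_\cd})$ and $\tfrac12(\order G-\order{G_\cd})$ respectively, and removing the symmetric element $1$ from the $+$ part gives the stated count for $I^{-}$. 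The target cardinality $\order F^{\frac12(\order G-\order{G_\cd})}=\order F^{\dim_F I^{-}}=\order{I^{-}}$ is therefore exactly $\order{I^{-}}$, which strongly suggests the theorem should follow from a bijection $V_{\cd}(FG)\leftrightarrow I^{-}$.

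To build that bijection I would use the Cayley map. For $x\in I$ the element $1-x$ is a unit (as $x$ is nilpotent), and I would define $c(x)=(1+x)(1-x)^{-1}$. A direct check shows $c(x)\in V(FG)$, that $c$ is injective on $I$, and that $c(x)^{\cd}=c(x)^{-1}$ precisely when $x^{\cd}=-x$, i.e.\ when $x\in I^{-}$: indeed $c(x)^{\cd}=(1-x^{\cd})^{-1}(1+x^{\cd})$ and $c(x)^{-1}=(1-x)(1+x)^{-1}$, and these agree iff $x^{\cd}=-x$ using that $1+x$ and $1-x$ commute. The one subtlety is surjectivity of $c\colon I^{-}\to V_{\cd}(FG)$: given a $\cd$-unitary $u$, I must solve $u=(1+x)(1-x)^{-1}$ for $x\in I$, i.e.\ $x=(u-1)(u+1)^{-1}$, and here I need $u+1$ to be invertible. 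Since $u\in 1+I$, we have $u+1=2+ (u-1)=2(1+\tfrac12(u-1))$ with $\tfrac12(u-1)\in I$ nilpotent, so $u+1$ is a unit precisely because $2$ is invertible in $F$ — this is exactly where $p$ odd is essential, and I expect this invertibility-of-$u+1$ point (together with verifying $x\in I^{-}$, which follows by reversing the eigenvalue computation) to be the main thing to nail down carefully. Once the Cayley map is shown to be a bijection $I^{-}\to V_{\cd}(FG)$, the order formula is immediate:
\[
\order{V_{\cd}(FG)}=\order{I^{-}}=\order F^{\dim_F I^{-}}=\order F^{\frac12(\order G-\order{G_\cd})},
\]
completing the proof.
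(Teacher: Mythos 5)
Your proposal is correct and follows essentially the same route as the paper: the paper's map $f(x)=(1-x)(1+x)^{-1}$ is your Cayley transform up to the substitution $x\mapsto -x$, and your $I^{-}$ coincides with the paper's space $FG^{-}_{\cd}$ of skew-symmetric elements (any skew element has augmentation $0$ because $p$ is odd), so the dimension count $\dim_F I^{-}=\tfrac12(\order{G}-\order{G_{\cd}})$ is the same. The only cosmetic difference is that you justify invertibility of $1-x$ and of $u+1$ via nilpotence of the augmentation ideal, whereas the paper reads off the augmentation values $1$ and $2$; both yield the same bijection and the same order formula.
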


Let $\xi(G)$ denote the center of the group $G$ and $\xi(G)\{2\}$ denote the set of elements of order two in $\xi(G)$.
\begin{theorem}\label{main_theorem}
	Let $G$ be a finite $2$-group. If  $F$ is  a finite field of characteristic two, then
	\[
	\order{V_*({F}G)}=\Theta \cdot \order{F}^{\frac{1}{2}(\order{G}+\order{G\{2\}})-1}
	\]
	for some integer $\Theta$.
	Moreover, if the set $T_c=\{g\in G\;\vert\;g^2=c\}$ is  commutative for some $c\in \xi(G)\{2\}$, then $\Theta$ does not depend on the field $F$.
\end{theorem}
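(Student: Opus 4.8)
The plan is to reduce both statements to a single counting identity and then to analyse the image of the norm‑type map $u\mapsto uu^{*}$. Write $J$ for the augmentation ideal of $FG$, so $V(FG)=1+J$, and let $\theta\colon V(FG)\to V(FG)$ be defined by $\theta(u)=uu^{*}$. Since $*$ is an involution we have $\theta(u)^{*}=(u^{*})^{*}u^{*}=uu^{*}=\theta(u)$, and $\theta(u)=1$ exactly when $u\in V_{*}(FG)$. If $\theta(u)=\theta(u_{0})$ then
\[
(u_{0}^{-1}u)(u_{0}^{-1}u)^{*}=u_{0}^{-1}(uu^{*})(u_{0}^{*})^{-1}=u_{0}^{-1}(u_{0}u_{0}^{*})(u_{0}^{*})^{-1}=1,
\]
so $u_{0}^{-1}u\in V_{*}(FG)$; hence every nonempty fibre of $\theta$ is a left coset of $V_{*}(FG)$, and therefore
\[
\order{V_{*}(FG)}=\frac{\order{V(FG)}}{\order{\imf\theta}}=\frac{\order{F}^{\order{G}-1}}{\order{\imf\theta}} .
\]
Since $V(FG)$ is a finite $p$-group, $\order{\imf\theta}$ is a power of $p$, and the whole problem is to control it.

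Next I would localise $\imf\theta$. For $u=1+a$ with $a\in J$ we have $\theta(u)=1+(a+a^{*}+aa^{*})$. Put $W=\operatorname{span}_{F}\{\,g+g^{-1}\mid g\in G\,\}\subseteq J$, whose $F$-dimension is the number of two-element orbits of inversion on $G$, namely $\tfrac12(\order{G}-\order{G\{2\}})$. Then $a+a^{*}=\sum_{g}\alpha_{g}(g+g^{-1})\in W$; moreover $aa^{*}$ is symmetric, and its coefficient at each $k\in G$ with $k^{2}=1$ is zero — pair the index $g$ with $kg$ in characteristic two, and use $\chi(a)=0$ for $k=1$ — so $aa^{*}\in W$ as well. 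Hence $\imf\theta\subseteq 1+W$, so $\order{\imf\theta}$ divides $\order{F}^{\frac12(\order{G}-\order{G\{2\}})}$, and consequently
\[
\Theta=\frac{\order{V_{*}(FG)}}{\order{F}^{\frac12(\order{G}+\order{G\{2\}})-1}}=\frac{\order{F}^{\frac12(\order{G}-\order{G\{2\}})}}{\order{\imf\theta}}
\]
is a nonnegative power of $p$, in particular a positive integer. This proves the first assertion.

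For the second assertion one must show that, under the hypothesis, $\order{\imf\theta}$ equals $\order{F}^{\frac12(\order{G}-\order{G\{2\}})}$ divided by a number that depends only on $G$; equivalently, with $q(a)=a+a^{*}+aa^{*}$, that $q(J)$ is a translate of an $\mathbb{F}_{p}$-subspace of $W$ whose index is group-theoretic. The plan is to exploit the commutativity of $T_{c}$. For a central involution $c$ it forces $T_{c}=g_{0}E$ to be a coset of an elementary abelian subgroup $E\le G\{2\}$ with $\langle g_{0},E\rangle$ abelian and $g_{0}^{2}=c$; I would use this to linearise the squaring map $x\mapsto x^{2}$ on the part of the symmetric subspace of $J$ relevant to $q$ (note that $q$ restricts to $c\mapsto c^{2}$ on symmetric elements, and that $x\mapsto x^{2}$ is additive on commuting sets), and to transfer the analysis of the nonlinear term $aa^{*}$ to an abelian subalgebra, where Proposition~\ref{szakacs} already identifies the corresponding image. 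This should exhibit $q(J)$ as an $\mathbb{F}_{p}$-affine subspace of $W$ of index a fixed power of $2$, so that $\Theta$ does not depend on $F$.

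The main obstacle is precisely this last step. When $G$ is non-abelian $\theta$ is not a homomorphism: $q$ is only $\mathbb{F}_{p}$-quadratic, $q(a+b)=q(a)+q(b)+(ab^{*}+ba^{*})$, so a priori $\imf\theta$ is merely the image of a quadratic map, and its size could in principle vary with $\order{F}$ because of the mismatch between the linear term $a+a^{*}$, which scales by $\lambda$, and the Frobenius-type term $aa^{*}$, which scales by $\lambda^{2}$. The heart of the matter is to show that commutativity of a single $T_{c}$ suffices to decouple these and to force $q(J)$ to be $\mathbb{F}_{p}$-affine with field-independent index; the earlier steps are routine bookkeeping inside $FG$.
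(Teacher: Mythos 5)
Your argument for the first assertion is correct and genuinely different from the paper's. You observe that the nonempty fibres of $\theta(u)=uu^{*}$ are left cosets of $V_{*}(FG)$, so $\order{V_{*}(FG)}=\order{F}^{\order{G}-1}/\order{\imf\theta}$, and then show $\imf\theta\subseteq 1+W$ with $W=\operatorname{span}_{F}\{\,g+g^{-1}\mid g\in G\,\}$ of dimension $\tfrac12(\order{G}-\order{G\{2\}})$; since $\order{\imf\theta}$ and $\order{F}^{\dim W}$ are both powers of $2$, the quotient $\Theta$ is a positive integer. The coefficient computations (cancellation in pairs $h\leftrightarrow kh$ for $k\neq 1$ with $k^{2}=1$, and $\sum_h\alpha_h^{2}=\chi(a)^{2}=0$ at $k=1$) check out. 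The paper instead obtains this divisibility by induction on $\order{G}$, passing to $\overline{G}=G/\gp{c}$ and combining Lemma \ref{lemma_main} with the upper bound (\ref{ineqush1}) on $\order{S_H}$; your route avoids the induction for this half and is a clean alternative.

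The second assertion, however, is not proved: you describe a plan for showing that $q(J)$ is an affine subspace of $W$ with field-independent index and then explicitly flag that step as an unresolved obstacle, so the field-independence of $\Theta$ remains open in your write-up. (A further caveat: commutativity of $T_{c}$ does not force $T_{c}=g_{0}E$ for a subgroup $E$; the translates $g_{0}^{-1}h$, $h\in T_{c}$, are pairwise commuting involutions, but they need not be closed under multiplication, so $T_{c}$ is in general only a subset of such a coset.) The paper resolves precisely this difficulty by not attempting to compute $\imf\theta$ on all of $V(FG)$: it studies the restricted image $S_H=\{xx^{*}\mid x\in N^{*}_{\Psi}\}\subseteq I(H)^{+}$ for $H=\gp{c}$, whose elements are symmetric, supported on multiples of $\widehat{H}=1+c$, and free of order-two elements in their support by \cite[Lemma 2.5]{Balogh_IEJA}; this gives the upper bound (\ref{ineqush1}). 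Commutativity of $T_{c}$ is then used only to exhibit explicit subgroups $N_{1}$ and $N_{2}$ inside $S_H$ --- the latter via the identity $(1+\omega g+\omega g^{2})(1+\omega g+\omega g^{2})^{*}=1+(\omega+\omega^{2})g\widehat{H}$ and the index-two Artin--Schreier map $\tau(\omega)=\omega+\omega^{2}$ --- producing the matching lower bound (\ref{ineqush2}) and hence two-sided, field-independent control of $\Theta$ by induction. Some such reduction to $I(H)^{+}$, or an equally concrete determination of $\order{\imf\theta}$, is what your sketch is missing.
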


By combining Theorem \ref{main_theorem_p} and Theorem \ref{main_theorem}, we have  the following.

\begin{corollary}\label{C:1}
Let $G$ be a finite $p$-group. If  $F$ is  a finite field of characteristic $p$,  then the order of the $*$-unitary subgroup of $FG$ determines the order of $G$.
\end{corollary}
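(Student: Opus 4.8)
The plan is to give, for each prime $p$ and each fixed finite field $F$ of characteristic $p$, an explicit recipe recovering $\order G$ from the integer $\order{V_*(FG)}$; throughout write $q=\order F$. I would split into the cases $p$ odd and $p=2$, the latter being the only one that requires work.

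For $p$ odd the argument is immediate. The canonical involution $*$ is the linear extension of the anti-automorphism $g\mapsto g^{-1}$ of $G$, hence it arises from $G$, so Theorem~\ref{main_theorem_p} applies with $\cd=*$. Since $G$ is a $p$-group with $p$ odd, the only solution of $g^2=1$ in $G$ is $g=1$, so $G_*=\{g\mid g=g^{-1}\}=\{1\}$ and the theorem gives $\order{V_*(FG)}=q^{\frac12(\order G-1)}$. Thus $\order G=1+2\log_q\order{V_*(FG)}$ is determined.

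For $p=2$ the only real difficulty is that Theorem~\ref{main_theorem} gives merely $\order{V_*(FG)}=\Theta\cdot q^{\frac12(\order G+\order{G\{2\}})-1}$ for an integer $\Theta$ that we cannot pin down (it may depend on $F$ and on the internal structure of $G$). The point I would make is that one does not need the precise value of $\Theta$: two crude bounds suffice. On the one hand, a nontrivial finite $2$-group contains an element of order $2$, so $\order{G\{2\}}\ge 2$ (the identity together with such an element); combined with $\Theta\ge 1$ this gives $\order{V_*(FG)}\ge q^{\frac12\order G}$. On the other hand $V_*(FG)\le V(FG)$ and $\order{V(FG)}=q^{\order G-1}$, so $\order{V_*(FG)}\le q^{\order G-1}$. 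Taking logarithms to base $q$, these bounds place $\order G$ in the half-open interval $\bigl(\log_q\order{V_*(FG)},\,2\log_q\order{V_*(FG)}\bigr]$, which is nonempty because $\order{V_*(FG)}>1$ for $G\neq 1$.

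It then remains to invoke the elementary fact that an interval of the form $(x,2x]$ with $x>0$ contains exactly one power of $2$, namely $2^{\lfloor\log_2 x\rfloor+1}$, since any smaller power is $\le x$ and any larger one is $>2x$. Applying this with $x=\log_q\order{V_*(FG)}$ identifies $\order G$ uniquely from $\order{V_*(FG)}$ and $q$, and the case $G=1$ is trivial. I expect the only care needed is in the two bounds — recording $\order{G\{2\}}\ge2$ and checking the boundary case $\order G=2$, where the two bounds coincide — after which the argument is purely arithmetic, and Corollary~\ref{C:1} follows by combining the odd and even cases.
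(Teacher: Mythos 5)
Your proposal is correct and follows essentially the same route as the paper: both cases reduce to sandwiching $\order{V_*(FG)}$ between two powers of $\order{F}$ whose exponents are within a factor of two of each other, so that exactly one power of $p$ can equal $\order{G}$. If anything, your version is slightly more careful than the paper's at the boundary: by invoking $\order{G\{2\}}\geq 2$ you get the lower bound $\order{F}^{\order{G}/2}$ rather than the paper's $\order{F}^{\order{G}/2-1}$, which removes the potential endpoint overlap between the intervals for $\order{G}=2^n$ and $\order{G}=2^{n+1}$ that the paper's displayed chain leaves implicit.
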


\section{Proofs}

Let $G$ be a finite $p$-group,  let $F$ be a finite field of $char(F)=p>2$ and  let $\cd$ be an involution of $FG$ which arises from $G$. An element $x\in FG$ is called skew-symmetric under the involution $\cd$ if $x^{\cd}=-x$. Let $FG^-_{\cd}$ denote the set of all skew-symmetric elements of $FG$.

\begin{proof}[Proof of Theorem \ref{main_theorem_p}]
Let  $z\in FG$ such that $1+z$ is invertible. Clearly, $1-z$ and $1+z$ commute, therefore $1-z$ and $(1+z)^{-1}$ also commute.

Let $Q=\{x\in FG\;|\; 1+x \;\text{is invertible in $FG$}\}$.
	Let us define the map $f:Q \rightarrow FG$ by
	\[
	f(x)=(1-x)(1+x)^{-1}.
	\]

If   $y\in FG^-_{\cd}$, then  $\chi(y)=0$, so 
\[
\chi(1+y)=\chi(1-y)=\chi(1+y^{\cd})=1
\]
 and $1+y, 1-y, 1+y^{\cd}$ are normalized units. Hence
\[
\begin{split}
f(y)f(y)^{\cd}=&(1-y)(1+y)^{-1}(1+y^{\cd})^{-1}(1-y^{\cd})\\
&=(1-y)(1+y)^{-1}(1-y)^{-1}(1+y)\\
&=(1+y)^{-1}(1-y)(1-y)^{-1}(1+y)\\
&=1.
\end{split}
\]
Consequently, $f(y) \in V_{\cd}(FG)$ and $f: FG^-_{\cd}\rightarrow V_{\cd}(FG)$ is a surjection.
	
Let  $x\in V_{\cd}(FG)$. Evidently, $1+x,1+x^{\cd}$ and $1+x^{-1}$ are invertible, because $\chi(1+x)=\chi(1+x^{\cd})=\chi(1+x^{-1})=2$ is invertible in $F$. Therefore $V_{\cd}(FG)$ is a subset of $Q$. Let $y$ denote the element $f(x)$. Then
	\[
	\begin{split} y^{\cd}&=f(x)^{\cd}=(1+x^{\cd})^{-1}(1-x^{\cd})=(1+x^{-1})^{-1}(1-x^{-1})\\
	&=\big(x^{-1}(x+1) \big)^{-1}x^{-1}(x-1)=-(1+x)^{-1}xx^{-1}(1-x)\\
&=-y.
	\end{split}
	\]
	Therefore $f:V_{\cd}(FG) \rightarrow FG^-_{\cd}$ is a surjection.

	Similar computation shows that
	\[
	\begin{split}
	 f(f(x))=&\big(1-(1-x)(1+x)^{-1}\big)\big(1+(1-x)(1+x)^{-1}\big)^{-1}\\ &=\big((1+x)-(1-x)\big)(1+x)^{-1}\Big(\big((1+x)+(1-x)\big)(1+x)^{-1}\Big)^{-1}\\
	&=\big((1+x)-(1-x)\big)(1+x)^{-1}  (1+x) \big((1+x)+(1-x)\big)^{-1}\\
	&=\big((1+x)-(1-x)\big)\big((1+x)+(1-x)\big)^{-1}\\
&=x\\
	\end{split}
	\]
	for every $x\in V_{\cd}(FG)$, so $f$ is a bijection between
	$FG^-_{\cd}$ and $V_{\cd}(FG)$.
	
Since $FG^-_{\cd}$ is a linear space over $F$ with basis
	$\{\,g-g^{\cd}\;\vert \; g\in G\setminus G_{\cd}\,\}$, \[
\order{V_{\cd}(FG)}=\order{FG^-_{\cd}}=\order{F}^{\frac{1}{2}(\order{G}-\order{G_{\cd}})}.
\]
\end{proof}

Let $H$ be a normal subgroup of $G$ and let $I(H):=\gp{1+h \;\mid \; h\in H}_{FG}$ be an ideal of $FG$ generated by the set $\{ 1+h \mid  h\in H\}$.
Clearly,
\[
FG/I(H)\cong FG/\ker(\Psi) \cong F[G/H],
\]
where $\Psi: FG \to  FG/I(H)$ is the natural homomorphism.

Let us denote by $V_*(F\overline{G})$ the $*$-unitary subgroup of the factor algebra $FG/I(H)$, where $\overline{G}=G/H$.
It is easy to check that the set
\[
N^*_{\Psi}=\{x\in V(FG) \mid  \Psi(x) \in V_*(F\overline{G}) \}
\]
forms a subgroup in $V(FG)$.
Let $I(H)^+=\{1+x \, \vert\, x\in I(H) \}$. The subgroup  $I(H)^+$ is  normal in $V(FG)$ and
$S_H=\{ xx^* \mid  x\in N^*_{\Psi} \}$ is a subset of $I(H)^+$, because  $xx^* \in 1+\ker(\Psi)=I(H)^+$ for all $x\in N^*_{\Psi}$.

First, we need the following.

\begin{lemma}\label{lemma_main}
	Let $H$ be a normal subgroup of a finite $2$-group $G$. Set $\overline{G}=G/H$. If  $\order{F}=2^m\geq 2$, then
	\begin{equation}
	\textstyle
	\order{V_*(FG)}=\order{F}^{\order{\overline{G}}}\cdot \frac{\order{V_*(F\overline{G})}}{\order{S_H}}.
	\end{equation}
\end{lemma}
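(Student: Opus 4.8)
The plan is to build a short exact sequence that connects $V_*(FG)$, $V_*(F\overline{G})$, and the "correction" set $S_H$, and then read off the order from it. First I would consider the restriction of the natural homomorphism $\Psi\colon FG\to F[G/H]$ to the subgroup $N^*_\Psi$. By definition $N^*_\Psi=\{x\in V(FG)\mid \Psi(x)\in V_*(F\overline G)\}$, so $\Psi(N^*_\Psi)\subseteq V_*(F\overline G)$; I would first check this map is onto. Given $\bar u\in V_*(F\overline G)$, lift it to any normalized unit $x\in V(FG)$ with $\Psi(x)=\bar u$ (possible since $\Psi$ is surjective on normalized units, as $FG\to F\overline G$ is a surjection of augmented algebras); then $x\in N^*_\Psi$ by construction. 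Hence $\Psi|_{N^*_\Psi}\colon N^*_\Psi\twoheadrightarrow V_*(F\overline G)$ with kernel $N^*_\Psi\cap I(H)^+$. Since $I(H)^+=1+\ker\Psi$ and every element of $I(H)^+$ is a normalized unit lying in the kernel of $\Psi$, and such elements automatically satisfy $\Psi(1+z)=1\in V_*(F\overline G)$, the kernel is exactly $I(H)^+$. This gives $\order{N^*_\Psi}=\order{I(H)^+}\cdot\order{V_*(F\overline G)}$, and $\order{I(H)^+}=\order{\ker\Psi}=\order{F}^{\order{H}-1}$ because $F[G/H]\cong FG/\ker\Psi$ forces $\dim_F\ker\Psi=\order{G}-\order{\overline G}=\order{H}\cdot\order{\overline G}-\order{\overline G}$... here I must be careful: actually $\dim_F I(H)=\order G-\order{\overline G}$, so $\order{I(H)^+}=\order F^{\,\order G-\order{\overline G}}$.

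Next I would relate $V_*(FG)$ to $N^*_\Psi$. The key observation is that $V_*(FG)\subseteq N^*_\Psi$: if $u\in V_*(FG)$ then $uu^*=1$, hence $\Psi(u)\Psi(u)^*=\Psi(uu^*)=1$ (the involution $*$ descends to $F\overline G$ since it arises from $G$), so $\Psi(u)\in V_*(F\overline G)$. Now consider the map $\sigma\colon N^*_\Psi\to I(H)^+$ defined by $\sigma(x)=xx^*$; its image is precisely $S_H$. This is not a homomorphism in general, but I would argue it is constant on right cosets of $V_*(FG)$: for $v\in V_*(FG)$ one has $(vx)(vx)^*=v x x^* v^* = v(xx^*)v^{-1}$... which is only a conjugate of $xx^*$, not equal to it, so this needs adjustment. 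Instead I would use left cosets: $(xv)(xv)^*=x v v^* x^* = x x^*$ exactly when $v\in V_*(FG)$. So $\sigma$ factors through the left-coset space $V_*(FG)\backslash N^*_\Psi$, wait — $\sigma(xv)=\sigma(x)$ means $\sigma$ is constant on cosets $xV_*(FG)$, i.e. on $N^*_\Psi/V_*(FG)$ viewed as a set of left cosets, and I should check the induced map $N^*_\Psi/V_*(FG)\to S_H$ is a bijection: surjectivity is the definition of $S_H$, and injectivity follows because $\sigma(x)=\sigma(y)$ gives $xx^*=yy^*$, whence $(y^{-1}x)(y^{-1}x)^* = y^{-1}(xx^*)(y^{-1})^* $... again a conjugacy issue. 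The clean statement is: $\sigma(x)=\sigma(y)\iff y^{-1}x\in V_*(FG)$ can fail, so instead I expect the correct bookkeeping to be via a \emph{two-sided} argument, or by noting $[N^*_\Psi:V_*(FG)]=\order{S_H}$ through the transitive action of $I(H)^+$ on the fibers of $\sigma$.

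Assembling: once I have $[N^*_\Psi:V_*(FG)]=\order{S_H}$ and $\order{N^*_\Psi}=\order F^{\,\order G-\order{\overline G}}\cdot\order{V_*(F\overline G)}$, division yields
\[
\order{V_*(FG)}=\frac{\order F^{\,\order G-\order{\overline G}}\cdot\order{V_*(F\overline G)}}{\order{S_H}},
\]
but the claimed formula has $\order F^{\order{\overline G}}$ in the numerator, not $\order F^{\,\order G-\order{\overline G}}$; so somewhere a factor of $\order F^{\,\order G-2\order{\overline G}}$ must be absorbed, which signals that the correct intermediate index is $[V(FG):N^*_\Psi]$ combined with known orders $\order{V(FG)}=\order F^{\order G-1}$ and $\order{V(F\overline G)}=\order F^{\order{\overline G}-1}$. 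The honest route is: $[V(FG):N^*_\Psi]=[V(F\overline G):V_*(F\overline G)]$ since $\Psi$ maps $V(FG)$ onto $V(F\overline G)$ with kernel $I(H)^+\subseteq N^*_\Psi$, giving $\order{N^*_\Psi}=\order F^{\order G-\order{\overline G}}\cdot\order{V_*(F\overline G)}$, and then the coset count $[N^*_\Psi:V_*(FG)]=\order{S_H}$ finishes it provided one also accounts correctly — I will recompute the exponent carefully at the end. The main obstacle I anticipate is precisely this: proving that $\sigma\colon x\mapsto xx^*$ induces a genuine bijection $N^*_\Psi/V_*(FG)\xrightarrow{\ \sim\ }S_H$ of sets, since $\sigma$ is not a group homomorphism and the naive fiber argument produces conjugates rather than equalities; resolving it will require exploiting that $I(H)^+$ is a normal $2$-subgroup acting on the fibers, so that all fibers have equal size $\order{V_*(FG)}$, whence $\order{N^*_\Psi}=\order{V_*(FG)}\cdot\order{S_H}$.
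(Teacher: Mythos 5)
Your architecture is the same as the paper's: compute $\order{N^*_{\Psi}}=\order{I(H)^+}\cdot\order{V_*(F\overline{G})}$ from the surjection $\Psi|_{N^*_{\Psi}}\colon N^*_{\Psi}\to V_*(F\overline{G})$ with kernel $I(H)^+$, and then show that the fibers of $\sigma(x)=xx^*$ on $N^*_{\Psi}$ are exactly the cosets $xV_*(FG)$, so that $[N^*_{\Psi}:V_*(FG)]=\order{S_H}$. The gap is that you leave the injectivity half of the fiber argument as an unresolved ``main obstacle,'' and your stated reason for thinking the direct computation fails is wrong. You wrote $(y^{-1}x)(y^{-1}x)^*=y^{-1}(xx^*)(y^{-1})^*$ and stopped, calling it a conjugacy issue; but substituting the hypothesis $xx^*=yy^*$ collapses this to $y^{-1}(yy^*)(y^*)^{-1}=1$, so $y^{-1}x\in V_*(FG)$ and $x\in yV_*(FG)$ at once. (The paper does the equivalent manipulation: $xx^*=yy^*$ gives $y^{-1}x=y^*(x^*)^{-1}$, hence $(x^{-1}y)^{-1}=(x^{-1}y)^*$.) No conjugates survive, and the vague fallback you propose --- a ``transitive action of $I(H)^+$ on the fibers'' --- is neither developed nor needed.

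The second loose end is the exponent, and here your count is actually the correct one: $\dim_F I(H)=\order{G}-\order{\overline{G}}$ (a basis is $\{u(1+h)\mid u\in T(G/H),\ h\in H\setminus\{1\}\}$), so the factor that naturally appears is $\order{F}^{\order{G}-\order{\overline{G}}}$. This coincides with the stated $\order{F}^{\order{\overline{G}}}$ exactly when $\order{H}=2$, which is the only case in which the lemma is invoked (there $H$ is generated by a central involution); for $\order{H}>2$ the displayed formula should carry the exponent $\order{G}-\order{\overline{G}}$. You noticed the mismatch but deferred it with a promise to ``recompute the exponent carefully at the end,'' and the detour through $[V(FG):N^*_{\Psi}]$ changes nothing, since $\order{N^*_{\Psi}}=\order{I(H)^+}\cdot\order{V_*(F\overline{G})}$ either way. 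Had you carried out the substitution in your injectivity computation and stated the exponent as $\order{G}-\order{\overline{G}}$ (noting when it equals $\order{\overline{G}}$), your proof would be complete and essentially identical to the paper's.
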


\begin{proof}
	Let $\Phi: V(FG) \to V(FG)$ be a map such that  $\Phi(x)=xx^*$ for every $x\in V(FG)$.
	The sets $\Phi(x)$ and $\Phi(y)$ coincide if and only if $y \in x\cdot V_*(FG)$. Indeed, if $y \in x \cdot V_*(FG)$, then $y=xv$ for some $v \in V_*(FG)$. Therefore
	\[
	\Phi(y)=yy^*=xv(xv)^*=xvv^*x^*=xx^*=\Phi(x).
	\]
	Assume that $\Phi(x)=\Phi(y)$ for some $x,y\in V(FG)$. Then $xx^*=yy^*$, or equivalently, $y^{-1}x=y^*(x^*)^{-1}$. Therefore
	\[
	(x^{-1}y)^{-1}=y^{-1}x=y^*(x^*)^{-1}=(x^{-1}y)^*
	\]
	which confirms that $x^{-1}y\in V_*(FG)$.
	
	Since  $[N^*_{\Psi}:V_*(FG)] = \order{S_H}$,
	\[
	\textstyle
	 \order{V_*(FG)}=\frac{\order{N^*_{\Psi}}}{\order{S_H}}=\order{I(H)^+}\cdot \frac{\order{V_*(F\overline{G})} }{\order{S_H}}.
	\]
	We should note that $V_*(FG)$ is usually not a normal subgroup of $N^*_{\Psi}$.
	
	The ideal $I(H)$ can be considered as a vector space over $F$ with the following basis $\{\, u(1+h) \mid  u \in T(G/H),\; h \in H \,\}$, where $T(G/H)$ is a complete set of left coset representatives of $H$ in $G$. Consequently,
\[
\order{I(H)^+}=\order{I(H)}=\order{F}^{\frac{\order{G}}{\order{H}}}.
\]
\end{proof}

\begin{proof}[Proof of Theorem \ref{main_theorem}]
	Let $G$ be a $2$-group of order $2^n$ and let $H$ be a subgroup of $G$ generated by a central element $c$ of order two.
	Evidently, the set $S_H=\{\, xx^* \mid  x\in N^*_{\Psi}\}$ is a subset of
	$I(H)^+ \cap V_*(FG)$
	and every $y\in S_H$ is a $*$-symmetric element. Moreover, the support of $y$ does not contain elements of order two by \cite[Lemma 2.5]{Balogh_IEJA}.
	Thus,  every $y\in S_H$ can be written as
	\[
	y=1+\sum_{g\in G\setminus (G\{2\} \cup T_c)} \alpha_g (g+g^{-1})\widehat{H}+ \sum_{g\in T_c} \beta_g g\widehat{H},\qquad (\alpha_g, \beta_g \in F)
	\]
	where $T_c=\{\,g\in G\;\vert \;g^2=c\}$ and  $\widehat{H}=1+c$. This yields that
	\begin{equation}\label{ineqush1}
	\order{S_H} \leq \order{F}^{\frac{1}{4}(\order{G}-\order{G\{2\}}-\order{T_c})+\frac{1}{2}\order{T_c}}=\order{F}^{\frac{1}{4}(\order{G}-\order{G\{2\}}+\order{T_c})}.
	\end{equation}
Let us  prove that if $T_c$ is a commutative set, then
	\begin{equation}\label{ineqush2}
	\order{F}^{\frac{1}{4}(\order{G}-\order{G\{2\}}+\order{T_c})}\cdot 2^{-\frac{1}{2}\order{T_c}} \leq \order{S_H}.
	\end{equation}
Let $N_1$ be a group generated by the elements $1+\alpha_g (g+g^{-1})\widehat{H}$,  in which  $g^2\not\in H$ and $\alpha_g \in F$.
Evidently, $N_1$ is an elementary abelian subgroup of $I(H)^+$.
Since $g^2\not\in H$  (equivalently $g\in G\setminus (G\{2\} \cup T_c)$), 
\[
1+\alpha_g (g+g^{-1})\widehat{H}=1+\alpha_g g^{-1}(1+g^2)\widehat{H}\not=1
\]
and
	\[
	1+\alpha_g (g+g^{-1})\widehat{H}=(1+\alpha_g g\widehat{H})(1+\alpha_g g\widehat{H})^*\in S_H.
	\]
If $z\in N_1$, then
	\[
	z=\prod_{g^2\not\in H} (1+\alpha_g (g+g^{-1})\widehat{H})=\prod_{g^2\not\in H}(1+\alpha_g g\widehat{H})(1+\alpha_g g\widehat{H})^*,
	\]
so  $(1+\alpha_g g\widehat{H})\in I(H)^+$ and
	\[
	z=\Big(\prod_{g^2\not\in H}(1+\alpha_g g\widehat{H})\Big) \cdot \Big(\prod_{g^2\not\in H}(1+\alpha_g g\widehat{H})\Big)^*\in S_H.
	\]
	Thus $N_1$ is a subgroup in the set $S_H$ and $\order{N_1}=\order{F}^{\frac{1}{4}(\order{G}-\order{G\{2\}}-\order{T_c})}$. 	 
	
	The map $\tau: F\to F$ defined by $\tau(\alpha)=\alpha+\alpha^2$ $(\alpha \in F)$ is a homomorphism on the additive group of the field $F$ with kernel $\ker(\tau) =\{0,1\}$ (see \cite[Lemma 10]{Balogh_IEJA}). Therefore $\order{\imf(\tau)}=\frac{\order{F}}{2}$.
	
Suppose that $T_c$ is a commutative set.
Let $N_2$ be a group generated by the elements	 $1+\alpha_g g\widehat{H}$, in which  $g\in T_c$ and $\alpha_g \in F$.
	Since
	\[
	(1+\omega g+\omega g^2)(1+\omega g+\omega g^2)^*=1+(\omega+\omega^2) g \widehat{H}
	\]
	we have $1+\alpha_g g\widehat{H}\in S_H$ for every $\alpha_g \in \imf(\tau)$.
	The group $N_2$, being $T_c$ commutative, is a subgroup in $S_H$ and \[
	 \order{N_2}=\order{\imf(\tau)}^{\frac{1}{2}\order{T_c}}=\order{F}^{\frac{1}{2}\order{T_c}}\cdot 2^{-\frac{1}{2}\order{T_c}}.
	\]
	Let $x\in N_1$ and $y\in N_2$. There exist $x_1\in I(H)^+$ and $y_1\in N^*_{\Psi}$ such that $x_1x_1^*=x$ and $y_1y_1^*=y$. Since $I(H)^+$ is an elementary $2$-group, the element  $y$ commutes with $x_1$ and
	\[
	yx=yx_1x_1^*=x_1yx_1^*=x_1y_1y_1^*x_1^*=(x_1y_1)(x_1y_1)^*\in S_H.
	\]
	Therefore $N_1\times N_2$ is a subgroup in $S_H$ and
	\[
	\order{N_1\times N_2}=\order{F}^{\frac{1}{4}(\order{G}-\order{G\{2\}}-\order{T_c})+\frac{1}{2}\order{T_c}}\cdot 2^{-\frac{1}{2}\order{T_c}}.
	\]
	Consequently,
	\[
	\order{F}^{\frac{1}{4}(\order{G}-\order{G\{2\}}+\order{T_c})}\cdot 2^{-\frac{1}{2}\order{T_c}} \leq \order{S_H}.
	\]
	
	Now, we are ready to prove the theorem. If $n=3$, then the theorem is true by Propositions \ref{szakacs} and \ref{roza}.
	Suppose that $n>3$. In the factor group $\overline{G}=G/H$ the element $\overline{g}$ has order two
	if and only if  either $g\in G\{2\}$ or $g \in T_c$. Therefore, $\order{\overline{G}\{ 2 \}} = \frac{\order{G\{2\}}+\order{T_c}}{2}$.
	According to the inductive hypothesis
	\[\order{V_*(F\overline{G})}=2^s\cdot \order{F}^{\frac{1}{2}(\order{\overline{G}}+\order{\overline{G}\{2\}})-1}=2^s\cdot \order{F}^{\frac{1}{4}(\order{{G}}+\order{{G}\{2\}}+\order{T_c})-1}
	\]
	for some $s \geq 0$. Using  Lemma \ref{lemma_main} and equation (\ref{ineqush1}) we obtain that
	\[
	\begin{split}
	\order{V_*(FG)}&=\textstyle\order{F}^{\frac{\order{G}}{2}}\cdot \frac{\order{V_*(F\overline{G})}}{\order{S_H}}\\
&\textstyle \geq \order{F}^{\frac{\order{G}}{2}}\cdot  \frac{2^s\cdot \order{F}^{\frac{1}{4}(\order{G}+\order{G\{2\}}+\order{T_c})-1}}	 {\order{F}^{\frac{1}{4}(\order{G}-\order{G\{2\}}+\order{T_c})}}\\
	&=2^s\cdot \order{F}^{\frac{1}{2}(\order{G}+\order{G\{2\}})-1}.
	\end{split}
	\]
	Therefore
	\begin{equation}\label{inequV}
	2^s\cdot \order{F}^{\frac{1}{2}(\order{G}+\order{G\{2\}})-1} \leq \order{V_*(FG)}
	\end{equation}
	and $\order{V_*(FG)}$ is divisible by $\order{F}^{\frac{1}{2}(\order{G}+\order{G\{2\}})-1}$.

	Similarly, using  Lemma \ref{lemma_main} and  (\ref{ineqush2}), we obtain that
	\[
	\begin{split}
	\order{V_*(FG)}&=\textstyle\order{F}^{\frac{\order{G}}{2}}\cdot \frac{\order{V_*(F\overline{G})}}{\order{S_H}}\\
 &\textstyle \leq \order{F}^{\frac{\order{G}}{2}}\cdot  \frac{2^s\cdot \order{F}^{\frac{1}{4}(\order{G}+\order{G\{2\}}+\order{T_c})-1}}	 {\order{F}^{\frac{1}{4}(\order{G}-\order{G\{2\}}+\order{T_c})}\cdot 2^{-\frac{1}{2}\order{T_c}}}\\
	&=2^{s+\frac{1}{2}\order{T_c}}\cdot \order{F}^{\frac{1}{2}(\order{G}+\order{G\{2\}})-1}.
	\end{split}
	\]
	
	The size of the set $T_c$ does not depend on the field $F$. Since $2^s$ does not depend on the field $F$ by the inductive hypothesis,  the proof is complete.
\end{proof}

We should remark that $2^s$ in the inequality ($\ref{inequV}$) is usually not inherited via the factorization. For example, for the semidihedral group $D_{16}^-$ of order $16$ we have that
\[
\order{V_*(FD_{16}^-)}=2\cdot \order{F}^{\frac{1}{2}(\order{D_{16}^-}+\order{D_{16}^-\{2\}})-1}
\]
 by \cite[Lemma $3.4$]{Balogh_IEJA}. However, $D_{8}\cong D_{16}^-/H$, where $H$ is the center of $D_{16}^-$ and $D_{8}$ is the dihedral group of order $8$ and
$\order{V_*(FD_{8})}=\order{F}^{\frac{1}{2}(\order{D_{8}}+\order{D_{8}\{2\}})-1}$ by Proposition \ref{roza}.

\begin{proof}[Proof of Corollary \ref{C:1}]
If $p=2$, then  Theorem \ref{main_theorem} implies that
	\[
	\order{F}^{\frac{\order{G}}{2}-1} \leq \order{F}^{\frac{1}{2}(\order{G}+\order{G\{2\}})-1} \leq \order{V_*(FG)} \leq \order{F}^{\order{G}-1}.
	\]	
	If  $p$ is an odd prime, then $\order{V_*(FG)}=\order{F}^{\frac{1}{2}(\order{G}-1)}$ by  Theorem \ref{main_theorem_p}. Hence, $\order{F}^{\frac{\order{G}}{2}-1} \leq \order{V_*(FG)} \leq \order{F}^{\order{G}-1}$, which  confirms that the order of $V_*(FG)$ determines the order of $G$ for every finite $p$-groups.
\end{proof}

\bibliographystyle{abbrv}

\begin{thebibliography}{10}
	
	\bibitem{Balogh_IEJA}
	Z.~Balogh.
	\newblock On unitary subgroups of group algebras.
	\newblock {\em Int. Electron. J. Algebra}, 29:187--198, 2021.
	
	\bibitem{Balogh_MD}
	Z.~Balogh.
	\newblock Unitary units of the group algebra of modular groups.
	\newblock {\em J. Algebra Appl.}, 21(2):Paper No. 2250027, 7, 2022.
	
	\bibitem{Balogh_Creedon_Gildea}
	Z.~Balogh, L.~Creedon, and J.~Gildea.
	\newblock Involutions and unitary subgroups in group algebras.
	\newblock {\em Acta Sci. Math. (Szeged)}, 79(3-4):391--400, 2013.
	
	\bibitem{Balogh_Laver}
	Z.~Balogh and V.~Laver.
	\newblock Unitary subgroups of commutative group algebras of characteristic 2.
	\newblock {\em Ukra\"{\i}n. Mat. Zh.}, 72(6):751--757, 2020.
	
	\bibitem{Bovdi_survey}
	A.~Bovdi.
	\newblock The group of units of a group algebra of characteristic {$p$}.
	\newblock {\em Publ. Math. Debrecen}, 52(1-2):193--244, 1998.
	
	\bibitem{Bovdi_Erdei_II}
	A.~Bovdi and L.~Erdei.
	\newblock Unitary units in modular group algebras of groups of order $16$.
	\newblock {\em Technical Reports, Universitas Debrecen, Dept. of Math., L.
		Kossuth Univ.}, 4(157):1--16, 1996.
	
	\bibitem{Bovdi_Erdei_I}
	A.~Bovdi and L.~Erdei.
	\newblock Unitary units in modular group algebras of {$2$}-groups.
	\newblock {\em Comm. Algebra}, 28(2):625--630, 2000.
	
	\bibitem{Bovdi_Unitarity}
	A.~A. Bovdi.
	\newblock Unitarity of the multiplicative group of an integral group ring.
	\newblock {\em Mat. Sb. (N.S.)}, 119(161)(3):387--400, 448, 1982.
	
	\bibitem{Bovdi_Szakacs_III}
	A.~A. Bovdi and A.~A. Sakach.
	\newblock Unitary subgroup of the multiplicative group of a modular group
	algebra of a finite abelian p-group.
	\newblock {\em Mat. Zametki}, 45:6:23--29, 1989.
	
	\bibitem{Bovdi_Szakacs_II}
	A.~A. Bovdi and A.~Szak\'{a}cs.
	\newblock A basis for the unitary subgroup of the group of units in a finite
	commutative group algebra.
	\newblock {\em Publ. Math. Debrecen}, 46(1-2):97--120, 1995.
	
	\bibitem{Bovdi_Kovacs_I}
	V.~Bovdi and L.~G. Kov\'{a}cs.
	\newblock Unitary units in modular group algebras.
	\newblock {\em Manuscripta Math.}, 84(1):57--72, 1994.
	
	\bibitem{Bovdi_Rosa_I}
	V.~Bovdi and A.~L. Rosa.
	\newblock On the order of the unitary subgroup of a modular group algebra.
	\newblock {\em Comm. Algebra}, 28(4):1897--1905, 2000.
	
	\bibitem{Bovdi_Salim}
	V.~Bovdi and M.~Salim.
	\newblock On the unit group of a commutative group ring.
	\newblock {\em Acta Sci. Math. (Szeged)}, 80(3-4):433--445, 2014.
	
	\bibitem{Bovdi_Grichkov}
	V.~A. Bovdi and A.~N. Grishkov.
	\newblock Unitary and symmetric units of a commutative group algebra.
	\newblock {\em Proc. Edinb. Math. Soc. (2)}, 62(3):641--654, 2019.
	
	\bibitem{Creedon_Gildea_I}
	L.~Creedon and J.~Gildea.
	\newblock Unitary units of the group algebra {$\Bbb F_{2^k}Q_8$}.
	\newblock {\em Internat. J. Algebra Comput.}, 19(2):283--286, 2009.
	
	\bibitem{Creedon_Gildea_II}
	L.~Creedon and J.~Gildea.
	\newblock The structure of the unit group of the group algebra {$\Bbb
		F_{2^k}D_8$}.
	\newblock {\em Canad. Math. Bull.}, 54(2):237--243, 2011.
	
	\bibitem{Novikov}
	S.~P. Novikov.
	\newblock Algebraic construction and properties of {H}ermitian analogs of
	{$K$}-theory over rings with involution from the viewpoint of {H}amiltonian
	formalism. {A}pplications to differential topology and the theory of
	characteristic classes. {I}. {II}.
	\newblock {\em Izv. Akad. Nauk SSSR Ser. Mat.}, 34:253--288; ibid. 34 (1970),
	475--500, 1970.
	
	\bibitem{serre}
	J.-P. Serre.
	\newblock Bases normales autoduales et groupes unitaires en caract\'{e}ristique
	2.
	\newblock {\em Transform. Groups}, 19(2):643--698, 2014.
	
	\bibitem{Wang}
	Y.~Wang and H.~Liu.
	\newblock The unitary subgroups of group algebras of a class of finite
	p-groups.
	\newblock {\em Journal of Algebra and Its Applications}, 2021.
	
\end{thebibliography}

\end{document}